\documentclass[12pt,a4paper]{article}

\usepackage[dvips]{graphicx}
\usepackage{calc}
\usepackage{color}
\usepackage{amsmath}
\usepackage{amssymb}
\usepackage{amscd}
\usepackage{amsthm}
\usepackage{amsbsy}
\usepackage{delarray}
\usepackage{enumerate}
\usepackage[T1]{fontenc}
\usepackage{inputenc}
\usepackage{enumerate}


\begin{document}



\setlength{\parindent}{5mm}
\renewcommand{\leq}{\leqslant}
\renewcommand{\geq}{\geqslant}
\newcommand{\N}{\mathbb{N}}
\newcommand{\sph}{\mathbb{S}}
\newcommand{\Z}{\mathbb{Z}}
\newcommand{\R}{\mathbb{R}}
\newcommand{\C}{\mathbb{C}}
\newcommand{\F}{\mathbb{F}}
\newcommand{\g}{\mathfrak{g}}
\newcommand{\h}{\mathfrak{h}}
\newcommand{\K}{\mathbb{K}}
\newcommand{\RN}{\mathbb{R}^{2n}}
\newcommand{\ci}{c^{\infty}}
\newcommand{\derive}[2]{\frac{\partial{#1}}{\partial{#2}}}
\renewcommand{\S}{\mathbb{S}}
\renewcommand{\H}{\mathbb{H}}
\newcommand{\eps}{\varepsilon}

\theoremstyle{plain}
\newtheorem{theo}{Theorem}[subsection]
\newtheorem{prop}[theo]{Proposition}
\newtheorem{lemma}[theo]{Lemma}
\newtheorem{definition}[theo]{Definition}
\newtheorem*{notation*}{Notation}
\newtheorem*{notations*}{Notations}
\newtheorem{corol}[theo]{Corollaire}
\newtheorem{conj}[theo]{Conjecture}
\newtheorem{question}[theo]{Question}
\newtheorem*{question*}{Question}
\newenvironment{demo}[1][]{\addvspace{8mm} \emph{Proof #1.
    ---~~}}{~~~$\Box$\bigskip}

\newlength{\espaceavantspecialthm}
\newlength{\espaceapresspecialthm}
\setlength{\espaceavantspecialthm}{\topsep} \setlength{\espaceapresspecialthm}{\topsep}

\newenvironment{example}[1][]{\refstepcounter{theo} 
\vskip \espaceavantspecialthm \noindent \textsc{Example~\thetheo
#1.} }%
{\vskip \espaceapresspecialthm}

\newenvironment{remark}[1][]{\refstepcounter{theo} 
\vskip \espaceavantspecialthm \noindent \textsc{Remark~\thetheo
#1.} }%
{\vskip \espaceapresspecialthm}

\def\Homeo{\mathrm{Homeo}}
\def\Hameo{\mathrm{Hameo}}
\def\Diffeo{\mathrm{Diffeo}}
\def\Symp{\mathrm{Symp}}
\def\Id{\mathrm{Id}}
\newcommand{\norm}[1]{||#1||}
\def\Ham{\mathrm{Ham}}
\def\Cal{\mathrm{Cal}}

\title{The Calabi invariant for some groups of homeomorphisms}
\author{Vincent Humili\`ere$^1$}
\normalsize \maketitle \footnotetext[1]{LMU Munich, Germany (Formerly in Ecole Polytechnique, France)

Supported also by the ANR project "Symplexe".

Email: \texttt{vincent.humiliere@mathematik.uni-muenchen.de}}

\abstract{We show that the Calabi homomorphism extends to some groups of homeomorphisms on exact symplectic manifolds.

The proof is based on the uniqueness of the generating Hamiltonian (proved by Viterbo) of continuous Hamiltonian
isotopies (introduced by Müller and Oh).}

\section{Introduction}
\subsection{The Calabi homomorphism}

Let $(M,\omega)$ be a symplectic manifold, supposed to be \emph{exact}, that is $\omega=d\lambda$ for some 1-form
$\lambda$ called \emph{Liouville form}. Equivalently, this also means that there exists a vector field $X$ such that
the Lie derivative satisfies: $\mathcal{L}_X\omega=\omega$. The vector field $X$ is called the \emph{Liouville vector
field} and is related to the 1-form $\lambda$ by the relation $\iota_X\omega=\lambda$. For instance, cotangent bundles
are exact symplectic manifolds.

Thanks to the work of Banyaga \cite{banyaga,bounemoura}, the algebraic structure of the group $\Ham_c(M,\omega)$ of
smooth compactly supported Hamiltonian diffeomorphisms of $(M,\omega)$ is quite well understood: there exists a group
homomorphism, defined by Calabi \cite{calabi}
$$\Cal:\Ham_c(M,\omega)\to\R,$$ whose kernel $\ker(\Cal)$ is a simple group.

The Calabi homomorphism is defined as follows. Let $\phi\in\Ham_c(M,\omega)$ and let $H$ be a compactly supported
Hamiltonian function generating $\phi$, i.e., a smooth function $[0,1]\times M\to\R$ such that: \begin{itemize}
\item $\phi$ is the time one map of the flow $(\phi_H^t)_{t\in[0,1]}$ of the only time dependent vector field $X_H$
satisfying at any time $t\in[0,1]$, $$\iota_{X_H(t,\cdot)}\omega=dH(t,\cdot),$$
\item there exists a compact set in $M$ that contains all the supports of the functions $H_t=H(t,\cdot)$, for
$t\in[0,1]$.
\end{itemize}
Then, by definition, \begin{equation}\label{formule Calabi}\Cal(\phi)=\int_0^1\int_MH(t,x)\omega^ddt,\end{equation}
where $d$ is half the dimension of $M$. This expression does not depend on the choice of the generating function $H$,
and gives a group homomorphism.

\subsection{Question and result}

We consider the following question.

\begin{question}\label{question extension calabi}To which groups of homeomorphisms 
does the Calabi homomorphism extend?
\end{question}

Note that the Calabi homomorphism does not behave continuously with respect to the $C^0$-topology, as shows the
following example.

\begin{example}Let $\phi\in\Ham_c(\R^2,rdr\wedge d\theta)$, and consider the sequence $(\phi_n)$ in
$\Ham_c(\R^2,rdr\wedge d\theta)$ given by
$$\phi_n(r,\theta)=\frac1n\phi^{4n}(nr,\theta).$$
This sequence converges in the $C^0$-sense to $\Id$, but one can easily check that its Calabi invariant remains
constant.
\end{example}

We will define three interesting groups of homeomorphisms, denoted $G_1$, $G_2$ and $G_3$, and prove the following
result.

\begin{theo}\label{theorem principal}The Calabi homomorphism extends to a group homomorphism $G_3\to\R$. Moreover, we
have the following inclusions $\Ham_c(M,\omega)\subset G_1\subset G_2\subset G_3$.
\end{theo}

We will give the full definitions of $G_1$, $G_2$ and $G_3$ in Section \ref{section three groups}. Let us still give
here an idea of what they are:
\begin{description}
  \item[$G_1$] is the identity component of the group of compactly supported symplectic bilipschitz homeomorphisms
  whose flux is zero (see Section \ref{section G1}).
  \item[$G_2$] is the group generated by the homeomorphisms that admit particular generating functions (see Section \ref{section G2}).
  \item[$G_3$] is the group of homeomorphisms $\phi$ such that (on some interval where it is well defined) the isotopy
  $t\mapsto [\mu_t,\phi]$ is a $C^0$-Hamiltonian isotopy (in the sense of \cite{Oh}). Here, $\mu_t$ denotes the flow
  generated by the Liouville vector field $X$, and $[\mu_t,\phi]=\mu_t\circ\phi\circ\mu_t^{-1}\circ\phi^{-1}$ (see Section \ref{section
  G3}).
\end{description}

\medskip
\begin{remark}In the special case of the (2-dimensional) open disk, the fact that the Calabi homomorphism extends to $G_1$ was
already proved by Haissinsky \cite{haissinsky}\footnote{Area preserving quasiconformal maps of the plane are
bilipschitz. Therefore, Haissinsky's result is precisely the fact that the Calabi homomorphism extends to $G_1$.}. His
methods are completely different.

Let us also mention that Gambaudo and Ghys have proved that two diffeomorphisms of the disk that are conjugated by an
area preserving homeomorphism have same Calabi invariant \cite{gamb-ghys}.
\end{remark}

\subsection{Motivation}

Our motivation for this work comes from two distinct problems. The first one comes from the following question which
remains open.

\begin{question}[Fathi \cite{fathi}]\label{question simplicité}Is the group $\Homeo_c(\mathbb{D}_2,area)$ of compactly supported area preserving homeomorphisms
of the disk a simple group ?
\end{question}

Several non-trivial normal subgroups of $\Homeo_c(\mathbb{D}_2,area)$ have been defined by Ghys \cite{bounemoura},
Müller-Oh \cite{muller-oh} and recently by Le Roux \cite{leroux}. But so far, no one has been able to prove that any of
them is a proper subgroup.

Our study is inspired by the work of Müller and Oh. They introduced on any symplectic manifold $(M,\omega)$ a group
denoted $\Hameo(M,\omega)$, whose elements are homeomorphisms called \emph{hameomorphisms} (as the contraction of
"Hamiltonian homeomorphisms"). This group contains all compactly supported Hamiltonian diffeomorphisms and, in the case
of the disk, forms a normal subgroup of $\Homeo_c(\mathbb{D}_2,area)$. A. Fathi noticed that if one could extend the
Calabi homomorphism to the group of hameomorphisms, then it would be necessarily a proper subgroup, and
$\Homeo_c(\mathbb{D}_2,area)$ would not be simple.

In the present paper, we propose a different approach: instead of constructing a group which is known to be normal but
on which it is unknown whether the Calabi homomorphism extends, we construct a group to which the Calabi invariant
extends but for which it is unknown whether it is normal.

\bigskip Another motivation is a very natural general problem: how can one generalize Hamiltonian dynamics in a
non-smooth context? or (less optimistic) which properties of Hamiltonian maps can be extended? The present paper
concentrates on a particular aspect: the Calabi homomorphism.

Our interest in the groups $G_1$ and $G_2$ comes from the fact that they give large families of examples of elements of
$G_3$, but also from the fact they are quite natural generalizations of the Hamiltonian group, which could be
considered to study the extension of other aspects of Hamiltonian dynamics. As an example, all the rigidity results
obtained on Hamiltonian diffeomorphisms using generating functions technics may also hold for the elements of $G_2$
(and thus of $G_1$).

Several other possible groups generalizing the Hamiltonian group have already been considered in literature. The group
$\Hameo(M,\omega)$ mentioned above is one of them, another has been studied by the author in \cite{humi}. But this
direction of research is still to be developed.

\section{The three groups}\label{section three groups}
\subsection{The group $G_1$}\label{section G1}

The group $\Ham_c(M,\omega)$ can be characterised as the set of all symplectic diffeomorphism which are compactly
supported, isotopic to the identity and with zero flux. It is thus natural to introduce the following definition.

\begin{definition}\label{definition G1}We denote by $G_1$ the identity component of the group of compactly supported
bilipschitz (for some Riemannian metric) symplectic homeomorphisms whose flux is zero.
\end{definition}

\begin{remark}Since Lipschitz maps are almost everywhere differentiable, the pull-back of a differential form by a bilipschitz map is
well-defined as a differentiable form with $L^{\infty}$ coefficients. Therefore, as in the smooth case, a bilipschitz
homeomorphism $\phi$ of $M$ is symplectic if $\phi^*\omega=\omega$, and has zero flux if the one form
$\lambda-\phi^*\lambda$ is exact (recall that $(M,\omega)$ is supposed to be exact, with Liouville form $\lambda$).

Note that a bilipschitz homeomorphism which is the $C^0$-limit of smooth symplectomorphisms is symplectic in this
sense.
\end{remark}

\subsection{The group $G_2$}\label{section G2}

The group $G_2$ is another natural generalisation of the Hamiltonian group, based on the notion of generating function,
that we describe now.

\medskip
First recall that, according to Weinstein's Neighbourhood Theorem \cite{Weinstein}, there exists a neighbourhood $U$ of
the diagonal $\Delta$ in the symplectic manifold $(M\times M, \omega\oplus(-\omega))$, symplectomorphic to a
neighbourhood of the zero section in the cotangent $T^*\Delta$ and hence to a neighbourhood $V$ of the zero section in
$T^*M$. We will denote $j:U\to V$ such a symplectomorphism, and call it a \textit{Weinstein chart}.

Now, for any symplectic diffeomorphism $\phi$ of $M$, the image of its graph
$$L_{\phi}=j(\text{graph}(\phi))=j(\{(x,\phi(x))\in M\times M\,|\,x\in M\})$$ is a lagrangian submanifold in $T^*M$. Moreover, $L_{\phi}$ is exact
if and only if $\phi$ is Hamiltonian. If in addition $\phi$ is sufficiently close to the identity in the $C^1$-sense,
$L_{\phi}$ is the graph of the differential of a smooth compactly supported function $S:M\to\R$:
$$L_{\phi}=\text{graph}(dS)=\{(x,dS(x))\in T^*M\,|\,x\in M\}.$$ We then say that $\phi$ admits $S$ as a generating function.

Since any Hamiltonian diffeomorphism can be written as a product of Hamiltonian diffeomorphisms $C^1$-close to the
identity, the group $Ham_c(M,\omega)$ can be characterized as the subgroup of the diffeomorphisms of $M$ generated by
the elements that admit smooth compactly supported generating functions. When one tries to extend a property of
Hamiltonian diffeomorphisms to homeomorphisms (the Calabi invariant in our case), it is thus natural to consider
homeomorphisms that admits generating functions. This idea leads to the following definition.

\begin{definition}\label{defintion G2}A $C^1$ compactly supported function $S:M\to\R$ is called an \textnormal{admissible generating
function} if there exist a homeomorphism $\phi$ of $M$, and a symplectic diffeomorphism $j$ between a neighbourhood $U$
of the diagonal in $M\times M$ and a neighbourhood $V$ of the zero section in $T^*M$ such that
\begin{itemize}
\item $\text{graph}(\phi)\subset U$,
\item $\text{graph}(dS)\subset V$,
\item $j(\text{graph}(\phi))=\text{graph}(dS)$.
\end{itemize}
The homeomorphism $\phi$ associated to $S$ is called an \textnormal{admissible homeomorphism}.

An admissible generating function is called \textnormal{super-admissible} if it is the limit in the $C^1$-sense of a
sequence of $C^{\infty}$ admissible generating functions. A \textnormal{super-admissible homeomorphism} is an
admissible homeomorphism associated to a super-admissible generating function.

We denote by $G_2$ the group generated by homeomorphisms $\phi$ for which there exists some real number $\delta>0$ such
that for any $t$ in $[0,\delta]$, the conjugation by the Liouville flow $\mu_t\circ\phi\circ\mu_t^{-1}$ is also
super-admissible.
\end{definition}

\begin{remark}\label{remarque def conjugaison}As in the introduction, $\mu_t(x)$ denotes the flow (when it is defined) of the Liouville vector field $X$, at
time $t$ and point $x\in M$. Note that it satisfies $\mu_t^*\omega=e^t\omega$.

Let $\phi$ be a compactly supported homeomorphism of $M$. Then there exists a real number $\delta>0$, such that for any
$t\in[0,\delta]$, $\mu^t$ and $(\mu^t)^{-1}$ are well defined on the support of $\phi$. Thus, the conjugation
$\mu_t\circ\phi\circ\mu_t^{-1}$ is well defined on $\mu_t(\text{Supp}(\phi))$. In the complement of this set, it is the
identity where it is defined. Therefore, we can extend it to a well defined homeomorphism still denoted
$\mu_t\circ\phi\circ\mu_t^{-1}$ just by setting it to equal the identity where it is not defined.
\end{remark}

Even though the definition of $G_2$ looks quite strange, it is quite a large group. Indeed, it contains the group
$G_1$, as stated in Theorem \ref{theorem principal}, and it also contains a large family of examples, that we shall
construct in section \ref{section examples}.

\subsection{The group $G_3$}\label{section G3}

To define the group $G_3$ we first need the following notion.

\begin{definition}[Müller-Oh \cite{muller-oh}]A $C^0$-\textnormal{Hamiltonian isotopy} is a path $(\phi^t)_{t\in[0,\delta]}$ of homeomorphisms of $M$
for which there exist a compact set $K$ and a sequence of smooth Hamiltonian functions $H_n$ on $M$ with support in
$K$, such that
\begin{itemize}
\item $(H_n)$ converges to some continuous function $H:[0,\delta]\times M\to \R$ in the $C^0$-sense,
\item $(\phi_{H_n}^t)$ converges to $\phi^t$ in the $C^0$-sense, uniformly in $t\in[0,\delta]$.
\end{itemize}
The function $H$ is called a $C^0$-\textnormal{Hamiltonian function} generating $(\phi^t)$.
\end{definition}

\begin{remark}\label{remark hameotopy}The elements of $C^0$-Hamiltonian isotopies are \textit{symplectic homeomorphisms},
i.e., homeomorphisms which are the $C^0$ limit of a sequence of symplectic diffeomorphisms supported in a common
compact set.

It is not difficult to check that if $(\phi^t)$ and $(\psi^t)$ are two $C^0$-Hamiltonian isotopies generated by $F$ and
$G$, then $((\phi^t)^{-1})$ and $(\phi^t\circ\psi^t)$ are $C^0$-Hamiltonian isotopies generated by
$-F(t,(\phi^t)^{-1}(x))$ and $F(t,x)+G(t,\phi^t(x))$, and that if $f$ is any symplectic homeomorphism,
$(f^{-1}\circ\phi^t\circ f)$ is a $C^0$-Hamiltonian isotopy generated by $F(t,f(x))$. This means that the computations
are the same as in the smooth case.
\end{remark}

The main result concerning $C^0$-Hamiltonian isotopies is:

\begin{theo}[Viterbo \cite{viterbo}]\label{theorem viterbo}A given $C^0$-Hamiltonian isotopy is generated by a unique $C^0$-Hamiltonian
function.
\end{theo}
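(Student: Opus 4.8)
The plan is to reduce the statement to the case of the constant isotopy, and then to squeeze a spectral (action) invariant between a positive lower bound coming from the generating Hamiltonian and an upper bound coming from the $C^0$-smallness of the flow.

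First I would reduce uniqueness to the following claim: \emph{if the constant isotopy $(\Id)_{t\in[0,\delta]}$ is a $C^0$-Hamiltonian isotopy generated by $H$, then $H\equiv 0$.} Indeed, suppose a given isotopy $(\phi^t)$ is generated by two $C^0$-Hamiltonian functions $H$ and $H'$. Applying the composition rules recalled in Remark \ref{remark hameotopy}, the constant isotopy $\phi^t\circ(\phi^t)^{-1}=\Id$ is a $C^0$-Hamiltonian isotopy generated by $H'(t,x)-H(t,x)$; the claim then forces $H'=H$, which is exactly uniqueness. Being a $C^0$-limit of compactly supported functions, $H$ is moreover compactly supported in space, so $\sup H$ and $\inf H$ are attained.

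Next I would argue by contradiction. Assume $H\not\equiv0$; up to replacing the isotopy by its inverse (still constant, generated by $-H(t,(\phi^t)^{-1}(x))=-H(t,x)$) we may assume $\sup H>0$, say $H(t_0,x_0)=2c>0$. By continuity there are a ball $B\subset M$, an interval $[a,b]\ni t_0$, and approximating data $H_n\to H$, $\phi_{H_n}^t\to\Id$ uniformly in $t$, with $H_n(t,x)\ge c$ for all $(t,x)\in[a,b]\times B$ and all large $n$. The idea is to feed this into Viterbo's spectral invariants $c_\pm$ of the time-$[a,b]$ maps. On the one hand, comparing $H_n$ on $[a,b]$ with an autonomous bump supported in $B$ and bounded above by $c$, the monotonicity and locality of the spectral invariants should yield a positive lower bound $c_+\ge\kappa>0$ depending only on $c$ and the capacity of $B$, hence independent of $n$. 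On the other hand, since $\phi_{H_n}^t\to\Id$ in the $C^0$-sense the graphs converge to the diagonal (equivalently, to the zero section in the Weinstein chart), and these same invariants should tend to $0$. For $n$ large this is a contradiction, so $H\equiv0$.

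The main obstacle is precisely the second, $C^0$-side estimate: one must show that $C^0$-triviality of the flow forces the spectral invariants — equivalently, the min-max critical values of the generating functions of the graphs — to vanish in the limit, even though the generating Hamiltonians $H_n$ stay large on $[a,b]\times B$. This is the heart of the matter and is where the fine analysis of generating functions quadratic at infinity is genuinely needed: one controls the critical values by the $C^0$-size of the Lagrangian graph and exploits the additivity of $c_\pm$ along a subdivision of $[a,b]$, using that the flow moves points only slightly on each subinterval. By contrast, the local positivity in the lower bound is comparatively soft. In short, it is exactly the incompatibility between \emph{``the generating Hamiltonian is large somewhere''} and \emph{``the flow is $C^0$-small''} that the spectral invariants are designed to detect, and making this quantitative is the crux.
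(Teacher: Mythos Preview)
The paper does not contain a proof of this theorem: it is quoted as an external result of Viterbo and used as a black box. The only comment the paper makes is that ``its proof needs at some point a (hard!) rigidity result in symplectic topology due to Gromov.'' There is therefore no proof in the paper to compare your proposal against.

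That said, your outline is broadly faithful to the strategy in the literature. The reduction to the constant isotopy via the composition rules of Remark~\ref{remark hameotopy} is correct and standard. The contradiction argument via action/spectral invariants is indeed the heart of Viterbo's approach, and you correctly identify the crux: the lower bound from local positivity of $H_n$ is soft, while the upper bound --- that $C^0$-smallness of the flow forces the spectral numbers to be small --- is the genuinely hard step, and is exactly where Gromov-type rigidity enters (nonvanishing of a spectral invariant would produce a displacement or capacity obstruction incompatible with the graphs collapsing onto the diagonal). Your sketch is honest about this being the obstacle rather than pretending it is routine; as written it is an accurate roadmap, not a proof, and the paper itself makes no attempt to go further.
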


This theorem is the only non-trivial result needed in this paper. Its proof needs at some point a (hard!) rigidity
result in symplectic topology due to Gromov.

By Remark \ref{remarque def conjugaison}, for any compactly supported homeomorphism $\phi$ the commutator
$$[\mu_t,\phi]=\mu_t\circ\phi\circ\mu_t^{-1}\circ\phi^{-1},$$ is well defined, for $t$ small enough.

\begin{definition}\label{definition G3}We denote by $G_3$ the set of all compactly supported symplectic homeomorphisms $\phi$ for
which there exists some $\delta>0$ small
enough, such that the isotopy $([\mu_t,\phi])_{t\in[0,\delta]}$ is a $C^0$-Hamiltonian isotopy.
\end{definition}

Clearly, $G_3$ contains $Ham_c(M,\omega)$.

\begin{prop}\label{prop G3 groupe} The set $G_3$ is a group. Moreover, if the first compactly supported cohomology group $H^1_c(M,\R)$ vanishes,
$G_3$ does not depend on the choice of the Liouville vector field.
\end{prop}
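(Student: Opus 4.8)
The plan is to prove the two assertions separately, in both cases reducing everything to the closure properties of $C^0$-Hamiltonian isotopies recorded in Remark~\ref{remark hameotopy} (composition, inversion, and conjugation by a symplectic homeomorphism), together with the elementary observation that $G_3$ consists of compactly supported symplectic homeomorphisms, which form a group on their own.

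For the group structure, I would work purely with commutator identities. Fix $\phi,\psi\in G_3$ and choose a common $\delta>0$ (the minimum of the two radii given by Definition~\ref{definition G3}) small enough that all relevant flows $\mu_t$, $t\in[0,\delta]$, are defined on a compact set containing the supports, as in Remark~\ref{remarque def conjugaison}. The two algebraic identities
\[
[\mu_t,\phi\psi]=[\mu_t,\phi]\cdot\bigl(\phi\,[\mu_t,\psi]\,\phi^{-1}\bigr),\qquad
[\mu_t,\phi^{-1}]=\phi^{-1}\,[\mu_t,\phi]^{-1}\,\phi
\]
express the commutators of $\phi\psi$ and of $\phi^{-1}$ in terms of those of $\phi$ and $\psi$. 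Since $t\mapsto[\mu_t,\phi]$ and $t\mapsto[\mu_t,\psi]$ are $C^0$-Hamiltonian isotopies by hypothesis, Remark~\ref{remark hameotopy} shows that their inverses, and their conjugates by the symplectic homeomorphisms $\phi$ and $\phi^{-1}$, are again $C^0$-Hamiltonian, and that a composition of two such isotopies is $C^0$-Hamiltonian. Hence $[\mu_t,\phi\psi]$ and $[\mu_t,\phi^{-1}]$ are $C^0$-Hamiltonian isotopies on $[0,\delta]$, and since $\phi\psi$ and $\phi^{-1}$ are compactly supported symplectic homeomorphisms, both lie in $G_3$. The identity lies in $G_3$ because $[\mu_t,\Id]=\Id$ is generated by the zero Hamiltonian. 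This closes Part~1.

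For the independence statement, let $X,X'$ be two Liouville vector fields, with flows $\mu_t,\mu'_t$. The difference $Y:=X'-X$ satisfies $\iota_Y\omega=\lambda'-\lambda$, a \emph{closed} one-form (since $d\lambda=\omega=d\lambda'$), so $Y$ is symplectic. Writing $\mu'_t=\mu_t\circ\rho_t$, a direct computation identifies $\rho_t$ as the flow of the time-dependent vector field $Z_t=\mu_t^{*}Y$; using $\mu_t^{*}\omega=e^{t}\omega$ one checks $\mathcal{L}_{Z_t}\omega=0$, so each $\rho_t$ is a \emph{symplectic} isotopy. A commutator manipulation then yields the factorization
\[
[\mu'_t,\phi]=\bigl(\mu_t\,[\rho_t,\phi]\,\mu_t^{-1}\bigr)\cdot[\mu_t,\phi].
\]
Given $\phi\in G_3$ for $X$, the second factor is $C^0$-Hamiltonian by definition, so it suffices to show the first factor is a $C^0$-Hamiltonian isotopy; then $[\mu'_t,\phi]$ is too, giving $\phi\in G_3$ for $X'$, and by symmetry the reverse inclusion. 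The isotopy $t\mapsto\mu_t\,[\rho_t,\phi]\,\mu_t^{-1}$ is compactly supported, and its (formal) velocity field is a compactly supported \emph{symplectic} vector field, built from $Z_t$, $X$, and the conjugating maps. This is exactly where the hypothesis enters: when $H^1_c(M,\R)=0$, every compactly supported closed one-form is $d$ of a compactly supported function, so such a symplectic field is Hamiltonian, which is what makes the first factor $C^0$-Hamiltonian.

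I expect the \textbf{main obstacle} to be making this last step rigorous rather than formal. Two difficulties intertwine: the conjugating map $\mu_t$ is only \emph{conformally} symplectic ($\mu_t^{*}\omega=e^{t}\omega$) and, worse, it varies with the very parameter $t$ along which the isotopy runs, so Remark~\ref{remark hameotopy} does not apply verbatim; and $\phi$ is merely a homeomorphism, so $[\rho_t,\phi]$ has no genuine smooth velocity field and the reduction to $H^1_c$ cannot be carried out by a naive differentiation. The careful argument must instead produce the approximating smooth Hamiltonians directly — cutting the symplectic field $Z_t$ off to a compactly supported symplectic (hence, under $H^1_c=0$, Hamiltonian) field near the support of $\phi$ without altering the commutator, and then transporting $C^0$-convergence through the conjugation by $\mu_t$, where the conformal factor $e^{t}$ is absorbed into a rescaling of the generating Hamiltonians. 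The vanishing of $H^1_c(M,\R)$ is precisely the hypothesis that removes the flux obstruction in this cut-off step.
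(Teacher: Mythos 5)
Your first part (the group structure) is correct and is exactly the paper's argument: the same two commutator identities for $[\mu_t,\phi\circ\psi]$ and $[\mu_t,\phi^{-1}]$, combined with the closure properties of Remark \ref{remark hameotopy}. Nothing to add there.

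Your second part, however, has a genuine gap, and you have located it yourself without closing it. The decomposition $\mu'_t=\mu_t\circ\rho_t$ puts the correction factor on the wrong side: it forces you to conjugate the isotopy $[\rho_t,\phi]$ by $\mu_t$, and this conjugating map is both non-symplectic (only conformally symplectic, $\mu_t^*\omega=e^t\omega$) and varying with the isotopy parameter $t$, so Remark \ref{remark hameotopy} does not apply to it. The repair you sketch --- cutting off the field $Z_t$ and producing the approximating smooth Hamiltonians "directly" --- is precisely where all the difficulty sits, and it cannot be formulated as stated: $[\rho_t,\phi]$ has no velocity field because $\phi$ is merely a homeomorphism, so the flux/cut-off reduction to $H^1_c(M,\R)=0$ has nothing to differentiate; producing smooth approximations first is exactly what you are trying to prove. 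As written, the second half of the proposal is a plan, not a proof.

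The paper's proof shows the whole problem disappears if you compose on the other side. Set $\eta_t=\mu'_t\circ\mu_t^{-1}$. Then the conformal factors cancel,
$$\eta_t^*\omega=(\mu_t^{-1})^*\bigl((\mu'_t)^*\omega\bigr)=e^t\,(\mu_t^{-1})^*\omega=\omega,$$
so $\eta_t$ is a \emph{smooth symplectic} isotopy, which is Hamiltonian once $H^1_c(M,\R)=0$, hence in particular a $C^0$-Hamiltonian isotopy (take the constant approximating sequence). The algebraic identity
$$[\mu_t',\phi]=\eta_t\circ[\mu_t,\phi]\circ\bigl(\phi\circ\eta_t^{-1}\circ\phi^{-1}\bigr)$$
then exhibits $[\mu'_t,\phi]$ as a composition of three $C^0$-Hamiltonian isotopies, where the only conjugation occurring is by the \emph{fixed} symplectic homeomorphism $\phi$ --- exactly the operations licensed by Remark \ref{remark hameotopy}. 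No cut-off, no transport of $C^0$-convergence through $\mu_t$, and no new analysis is needed; the hypothesis $H^1_c(M,\R)=0$ enters only to make the smooth isotopy $\eta_t$ Hamiltonian. I recommend you replace your factorization by this one; your own factorization is algebraically correct but leads to an analytic problem that the tools of the paper do not solve.
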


\begin{demo} Let $\phi,\psi\in G_3$. For $\delta$ small enough $([\mu_t,\phi])_{t\in[0,\delta]}$ and
$([\mu_t,\psi])_{t\in[0,\delta]}$ are $C^0$-Hamiltonian isotopies. Then, note that
$$[\mu_t,\phi\circ\psi]=[\mu_t,\phi]\circ(\phi\circ[\mu_t,\psi]\circ\phi^{-1}),$$
and $$[\mu_t,\phi^{-1}]=\phi^{-1}\circ[\mu_t,\phi]^{-1}\circ\phi.$$ We conclude with Remark \ref{remark hameotopy} that
$G_3$ is a group.

Suppose now that $H^1_c(M,\R)=0$, and that $\mu_t'$ is the flow of another Liouville vector field. Then,
$\eta_t=\mu_t'\circ\mu_t^{-1}$ is a smooth symplectic isotopy which is Hamiltonian since $H^1_c(M,\R)=0$. Using once
again Remark \ref{remark hameotopy} and the identity
\begin{equation}\label{equation pas de H1}[\mu_t',\phi]=\eta_t\circ[\mu_t,\phi]\circ(\phi\circ\eta_t^{-1}\circ\phi^{-1}),\end{equation} we conclude that $G_3$ would be
the same if it was defined with another Liouville vector field.
\end{demo}

\section{Proof of the main theorem}
\subsection{Extension of the Calabi homomorphism}

In this section, we prove that the Calabi homomorphism extends to $G_3$. Let us first give a new formula for the
Calabi, for which we need to choose a Liouville form instead of choosing an isotopy.

\begin{lemma}\label{lemme new formula}Let $\phi\in\Ham_c(M,\omega)$ and let $H_{\lambda,\phi}$ be the generating
Hamiltonian function of the smooth Hamiltonian isotopy $([\mu_t,\phi])$. Then,
\begin{equation*} \Cal(\phi)=\frac{1}{d+1}\int_MH_{\lambda,\phi}(0,x)\omega^n.\end{equation*}
\end{lemma}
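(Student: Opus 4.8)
The plan is to compute the generating Hamiltonian of the commutator isotopy at $t=0$ and then to invoke a classical integral expression for $\Cal$ in terms of a primitive of $\lambda-\phi^*\lambda$.

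\emph{Step 1: the time-$0$ Hamiltonian.} First I would differentiate $\Psi_t=[\mu_t,\phi]$ in $t$ to find its generating vector field $Y_t$, defined by $\frac{d}{dt}\Psi_t=Y_t\circ\Psi_t$. Since the factor $\phi^{-1}$ is constant in $t$, $Y_t$ agrees with the generating field of $\mu_t\circ\phi\circ\mu_t^{-1}$, and the usual conjugation computation gives $Y_t=X-(\mu_t\circ\phi)_*X$. To convert this into a Hamiltonian I would contract with $\omega$, using $\mu_t^*\omega=e^t\omega$ (Remark~\ref{remarque def conjugaison}) and its companion $\mu_t^*\lambda=e^t\lambda$ (which follows from $\mathcal{L}_X\lambda=\lambda$, itself a consequence of $\iota_X\lambda=0$ and $\iota_X\omega=\lambda$). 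Writing $\phi^*\lambda-\lambda=dF_\phi$ for the compactly supported primitive that exists precisely because $\phi$ is Hamiltonian, a short manipulation yields $\iota_{Y_t}\omega=d\bigl(e^{t}\,F_\phi\circ\phi^{-1}\circ\mu_{-t}\bigr)$. In particular this $1$-form is exact, confirming that $\Psi_t$ is a Hamiltonian isotopy and identifying $H_{\lambda,\phi}(t,\cdot)=e^{t}\,F_\phi\circ\phi^{-1}\circ\mu_{-t}$; at $t=0$ this reduces to $H_{\lambda,\phi}(0,\cdot)=F_\phi\circ\phi^{-1}$.

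\emph{Step 2: Calabi via the primitive.} Because $\phi^{-1}$ preserves $\omega^d$, a change of variables gives $\int_M H_{\lambda,\phi}(0,x)\,\omega^d=\int_M F_\phi\,\omega^d$, so it suffices to prove $\int_M F_\phi\,\omega^d=(d+1)\Cal(\phi)$ (here $d$ denotes half the dimension of $M$, as in the Calabi formula). For this I would pick a compactly supported $H$ generating $\phi$ through its flow $\phi_H^s$, and let $F_s$ be the primitive of $(\phi_H^s)^*\lambda-\lambda$ with $F_0=0$. Differentiating in $s$, Cartan's formula together with $\iota_{X_{H_s}}\omega=dH_s$ and $\lambda(X_{H_s})=-X(H_s)$ gives $\frac{d}{ds}F_s=(H_s-X(H_s))\circ\phi_H^s$, so that $F_\phi=\int_0^1(H_s-X(H_s))\circ\phi_H^s\,ds$. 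Integrating over $M$ and using that each $\phi_H^s$ preserves $\omega^d$ collapses the composition, leaving $\int_M F_\phi\,\omega^d=\int_0^1\int_M(H_s-X(H_s))\,\omega^d\,ds$.

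The factor $d+1$ then emerges from the single identity $\int_M X(H_s)\,\omega^d=-d\int_M H_s\,\omega^d$. This follows from $\mathcal{L}_X\omega^d=d\,\omega^d$ (a consequence of $\mathcal{L}_X\omega=\omega$), which gives $\mathcal{L}_X(H_s\,\omega^d)=(X H_s)\,\omega^d+d\,H_s\,\omega^d$; as $H_s\,\omega^d$ is a top-degree compactly supported form, its Lie derivative is exact by Cartan's formula and hence integrates to $0$ by Stokes. Substituting back, the $H_s$-term contributes $\Cal(\phi)$ and the $X(H_s)$-term contributes $+d\,\Cal(\phi)$, so $\int_M F_\phi\,\omega^d=(d+1)\Cal(\phi)$, which is the lemma. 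I expect the principal obstacle to be Step~1: tracking the conformal factors $e^{\pm t}$ carefully enough to see that $\iota_{Y_t}\omega$ is genuinely exact — this is exactly where being Hamiltonian, rather than merely symplectic, is used — and extracting the clean primitive at $t=0$; the remainder is the standard Stokes computation in which $\mathcal{L}_X\omega^d=d\,\omega^d$ manufactures the coefficient.
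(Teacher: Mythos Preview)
Your proof is correct, and the computations in both steps check out (including the conformal bookkeeping in Step~1 and the Stokes identity $\int_M X(H_s)\,\omega^d=-d\int_M H_s\,\omega^d$ in Step~2). However, your route is genuinely different from the paper's.

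The paper never computes $H_{\lambda,\phi}$ at all. Instead it argues structurally: since conjugation by $\mu_\delta$ rescales the generating Hamiltonian by $e^\delta$ and the volume by $e^{d\delta}$, one gets $\Cal(\mu_\delta\phi\mu_\delta^{-1})=e^{(d+1)\delta}\Cal(\phi)$, hence $\Cal([\mu_\delta,\phi])=(e^{(d+1)\delta}-1)\Cal(\phi)$; writing the left side via the defining formula~(\ref{formule Calabi}) applied to $H_{\lambda,\phi}$ and sending $\delta\to 0$ yields the lemma in one line. This uses only that $\Cal$ is a homomorphism and the scaling law, and is noticeably shorter. Your approach, by contrast, identifies $H_{\lambda,\phi}(t,\cdot)=e^{t}F_\phi\circ\phi^{-1}\circ\mu_{-t}$ explicitly and then invokes the classical identity $\int_M F_\phi\,\omega^d=(d+1)\Cal(\phi)$. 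What you gain is extra information: the full time-dependent Hamiltonian of the commutator isotopy (not just its value at $t=0$) and a transparent link to the ``action'' description of $\Cal$ via the primitive of $\phi^*\lambda-\lambda$. What the paper gains is brevity and a proof that avoids differentiating the commutator or tracking pushforwards of $X$.
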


\begin{proof}First note that if $\phi$ is the time one map of some Hamiltonian function $H$, and if we suppose
$\mu_{\delta}^{-1}\circ\phi\circ\mu_{\delta}$ to be well defined, then it can be generated by the Hamiltonian function
$e^{\delta}H\circ\mu_{\delta}^{-1}$. After an easy change of variables in Equation (\ref{formule Calabi}), one gets
$$\Cal(\mu_{\delta}^{-1}\circ\phi\circ\mu_{\delta})=e^{(d+1)\delta}\Cal(\phi),$$
where $d$ is half the dimension of $M$. Thus,
$$\Cal([\mu_{\delta},\phi])=(e^{(d+1)\delta}-1)\Cal(\phi).$$
Hence, applying formula (\ref{formule Calabi}) to $H_{\lambda,\phi}$,
$$\Cal(\phi)=\frac{1}{e^{(d+1)\delta}-1}\int_0^\delta\int_MH_{\lambda,\phi}(t,x)\omega^n
dt.$$ Now, letting $\delta$ converge to $0$, we get the desired formula.
\end{proof}

Once this formula obtained, extending the Calabi homomorphism to $G_3$ is very easy, even though it relies on the "hard
symplectic topology" uniqueness Theorem \ref{theorem viterbo}.

\begin{proof}let $\phi\in G_3$ and let $H$ be the
\textbf{unique} $C^0$-Hamiltonian function generating $([\mu_t,\phi])_{t\in[0,\delta]}$ for some small $\delta$. We
set:
\begin{equation*}\widetilde{\Cal}(\phi)=\frac{1}{d+1}\int_MH(0,x)\omega^n.\end{equation*}
By Lemma \ref{lemme new formula}, $\widetilde{\Cal}$ coincide with $\Cal$ on $Ham_c(M,\omega)$. Moreover using Remark
\ref{remark hameotopy} and the formulas in the proof of Proposition \ref{prop G3 groupe}, one checks easily that
$\widetilde{\Cal}:G_3\to\R$ is a group homomorphism.
\end{proof}

\begin{remark} If $H^1_c(M,\R)=0$, then $\widetilde{\Cal}$ does not depend on the choice of the Liouville vector field.
This is an immediate consequence of Equation \ref{equation pas de H1}.\end{remark}

\subsection{Proof of the inclusion $G_1\subset G_2$}

We are going to prove that an element of $G_1$ which is sufficiently close to the identity in the bilipschitz sense is
a super-admissible homeomorphism (Definition \ref{defintion G2}). Since any element $g$ of $G_1$ can be written as a
product of elements of $G_1$ close to the identity (simply cut any path joining $g$ to the identity in small pieces),
this will imply that $G_1$ is included in $G_2$. This fact is standard for diffeomorphisms, and is not more difficult
in the bilipschitz case.

Let $g\in G_1$, close enough to the identity in the bilipschitz sense. Then, in particular $g$ is $C^0$-close to the
identity and its graph lies in the domain of a Weinstein chart $j:U\to V$. Now, the map $\Id\times g: M\to M\times M$,
$x\mapsto(x,g(x))$ is Lipschitz close to the diagonal inclusion $x\mapsto(x,x)$. As a consequence, the conjugated map $
a=j\circ(\Id\times g)\circ j^{-1}$ is Lipschitz-close to the zero section of the cotangent bundle $T^*M$. Standard
arguments (the same as in the $C^1$ case) then show that the image of $a$ is the graph of the section $s$ of $T^*M$
given by $$s=a\circ(\pi\circ a)^{-1},$$ where $\pi:T^*M\to M$ is the canonical projection. Moreover, this section $s$
is Lipschitz-close to the zero section.

%
%

It remains to prove that the Lipschitz 1-form $s$ is exact. This follows from the fact that the flux of the
homeomorphism $g$ vanishes. Indeed, since $g$ has zero flux, for any Liouville form $\lambda$, $(\Id\times
g)^*(\lambda\oplus(-\lambda))$ is an exact one form on $M$. Since the map $\Id\times g$ is homotopic to the map
$\Id\times\Id$, this implies that the pull-back of any primitive of $\omega\oplus(-\omega)$ is exact. Let $\lambda_0$
denotes the standard Liouville form on $T^*M$, one has $dj^*\lambda_0=(\omega\oplus(-\omega))$ hence $(\Id\times
g)^*j^*\lambda_0$ is exact. It follows that $s=s^*\lambda_0=(q^{-1})^* a^*\lambda_0$ is exact.

Now, if we denote by $S$ the compactly supported primitive of $s$, it is a $C^{1,1}$-function which is admissible by
construction. Moreover, it is small in the $C^{1,1}$ sense and thus can be approximated in the $C^1$ sense by
$C^2$-small smooth functions. But it is well known that $C^2$-small smooth functions are admissible. Therefore, $S$ is
super-admissible.

Finally, for $t$ small enough, $\mu_t\circ g\circ\mu_t^{-1}$ remains Lipschitz-close to the identity. Thus, $g$ is one
of the generators of $G_2$.$\quad\Box$

\subsection{Proof of the inclusion $G_2\subset G_3$}

Theorem \ref{theorem principal} clearly follows from the following proposition. We denote by $\Psi(S)$ the admissible
homeomorphism associated to an admissible generating function $S$.

\begin{prop}\label{proposition FG->hameo}Let $t\mapsto S_t$, $t\in[0,\delta]$ be a $C^1$ path of super-admissible generating functions,
associated to a fixed Weinstein chart, which is the $C^1$-limit of a smooth path of smooth admissible generating
functions. Then, the path $t\mapsto\Psi(S_t)$ is a $C^0$-Hamiltonian isotopy.
\end{prop}

\begin{remark}We can construct examples of such paths using Darboux coordinates (see Section \ref{section examples} below). By the way,
this proposition gives new examples of $C^0$-Hamiltonian isotopies. As an example, the argument shows that any
Lipschitz continuous path in $G_1$ is a $C^0$-Hamiltonian isotopy.
\end{remark}

To prove Proposition \ref{proposition FG->hameo}, we will need two (classical) lemmas.

\begin{lemma}\label{lemme homeo-fnct}Let $j:U\to V$ be a Weinstein chart. For any integer $k\geq0$, the map $\Psi$ is a homeomorphism between the set of $C^{k+1}$ admissible
generating functions associated to $j$ (endowed with the $C^{k+1}$-topology) and the set of $C^k$ admissible
(diffeo)homeomorphisms associated to $j$ (endowed with the $C^k$-topology).
\end{lemma}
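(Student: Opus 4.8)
The plan is to prove that $\Psi$ is a bijection that is continuous in both directions between the two topological spaces. The correspondence itself is standard: given a Weinstein chart $j:U\to V$ and a $C^{k+1}$ admissible generating function $S$, the homeomorphism $\Psi(S)$ is determined by the requirement $j(\mathrm{graph}(\Psi(S)))=\mathrm{graph}(dS)$, and conversely an admissible homeomorphism $\phi$ determines $dS$ as the section of $T^*M$ whose graph is $j(\mathrm{graph}(\phi))$. Since $S$ is $C^{k+1}$, $dS$ is a $C^k$ section, and under the admissibility hypothesis the resulting $\phi$ is a genuine $C^k$ diffeomorphism. So I would first record that $\Psi$ and its inverse are well-defined set maps in the stated regularity classes, the inverse sending $\phi$ to the (compactly supported) primitive of the $1$-form pulled back through $j$.

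First I would make the correspondence explicit in coordinates. Writing $a=j\circ(\mathrm{Id}\times\phi)\circ j^{-1}$ as in the proof of $G_1\subset G_2$, the graph of $\phi$ corresponds to the image of the section $s=a\circ(\pi\circ a)^{-1}$, and the admissibility condition $\mathrm{graph}(dS)\subset V$ together with $s=dS$ pins down $\phi$. The key point is that the passage $S\mapsto dS\mapsto s\mapsto \phi$ is built entirely from three operations: differentiation $S\mapsto dS$ (which is continuous from $C^{k+1}$ to $C^k$), composition and inversion of maps (continuous in the $C^k$-topology on the relevant spaces of maps, by the standard calculus of $C^k$ maps), and the fixed smooth diffeomorphism $j$. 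Each of these is continuous in the appropriate $C^k$-topology, so the composite $\Psi$ is continuous from the $C^{k+1}$-topology on generating functions to the $C^k$-topology on diffeomorphisms.

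For continuity of the inverse I would reverse the same chain. Given $\phi$ in the $C^k$-topology, $a=j\circ(\mathrm{Id}\times\phi)\circ j^{-1}$ depends continuously on $\phi$, the projection $\pi\circ a$ is a $C^k$ diffeomorphism of $M$ depending continuously on $\phi$ (admissibility guarantees it is invertible), and hence $s=a\circ(\pi\circ a)^{-1}$ is a $C^k$ section depending continuously on $\phi$. The exactness of $s$, which holds by the admissibility hypothesis, lets me recover $S$ as the unique compactly supported primitive; integration is continuous from the $C^k$-topology on exact $1$-forms to the $C^{k+1}$-topology on functions (fixing the constant by compact support). This shows $\Psi^{-1}$ is continuous, completing the proof that $\Psi$ is a homeomorphism.

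The main obstacle, and the only step that is not purely formal, is verifying that each operation in the chain is continuous in exactly the claimed $C^k$-topologies with no loss of derivatives, in particular that inverting the base map $\pi\circ a$ does not cost regularity. This is handled by the inverse function theorem together with the standard fact that $\phi\mapsto\phi^{-1}$ is continuous in the $C^k$-topology on the group of $C^k$ diffeomorphisms near the identity; the admissibility hypothesis is precisely what keeps everything in a neighbourhood where these maps are diffeomorphisms. Because all maps here are compactly supported and $C^0$-close to the identity, I would phrase the continuity statements uniformly on the relevant compact set, so that the $C^k$-norms behave well and no boundary issues arise.
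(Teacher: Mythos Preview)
Your proposal is correct and follows essentially the same approach as the paper: both argue via the explicit relation $dS=a\circ(\pi\circ a)^{-1}$ with $a=j\circ(\mathrm{Id}\times\phi)\circ j^{-1}$, and deduce continuity in both directions from the continuity of differentiation, composition, inversion, and integration in the relevant $C^k$ topologies. The paper is in fact even sketchier than you are (it explicitly leaves the details to the reader); the one thing it adds is the symmetric formula for the forward direction, namely $\mathrm{Id}\times\Psi(S)=b\circ(p_1\circ b)^{-1}$ with $b=j^{-1}\circ dS\circ j$ and $p_1$ the first projection, which makes the passage $dS\mapsto\phi$ just as transparent as the reverse.
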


\begin{demo} The details of the proof of this lemma will be left to the reader. We just give here the idea: as in the previous
section, we use the relation between $S$ and $\Psi(S)$. Denote $a=j\circ(\Id\times\Psi(S))\circ j^{-1}$. Then by
construction, $\pi\circ a$ is invertible and one has
$$dS=a\circ(\pi\circ a)^{-1}.$$
This gives continuity properties of $\Psi^{-1}$.

Conversely, if we consider $p_1:M\times M\to M$ the projection on the first factor, and denote $b=j^{-1}\circ dS \circ
j$, then by construction, $p_1\circ b$ is invertible and one has $$\Id\times\Psi(S)=b\circ(p_1\circ b)^{-1}.$$ This
allows to prove continuity properties for $\Psi$.
\end{demo}

\begin{lemma}\label{lemme hamilton-jacobi}Let $t\mapsto S_t$ be a smooth path of smooth admissible generating functions associated
to a fixed Weinstein chart and denote $H$ the compactly
supported Hamiltonian function that generates the Hamiltonian isotopy $t\mapsto\Psi(S_t)$. Then,
$$H(t,x)=-\derive{S_t}{t}(\pi\circ j\circ(\Psi(S_t)^{-1}\times\Id)\circ j^{-1}(x)).$$
\end{lemma}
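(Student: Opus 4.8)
The plan is to verify this Hamilton--Jacobi-type formula by differentiating the defining relation of the generating function in time. The key geometric object is the lagrangian submanifold $L_t = j(\mathrm{graph}(\Psi(S_t))) = \mathrm{graph}(dS_t)$ sitting inside $T^*M$. I would first set up explicit notation for the correspondence between a point $x \in M$ and the point of $\mathrm{graph}(dS_t)$ lying over it. Writing $a_t = j \circ (\mathrm{Id} \times \Psi(S_t)) \circ j^{-1}$ as in Lemma \ref{lemme homeo-fnct}, the base projection $\pi \circ a_t$ is a diffeomorphism of $M$, so every point of the lagrangian is $(q, dS_t(q))$ with $q = (\pi \circ a_t)^{-1}(\cdot)$. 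The term appearing in the claimed formula, namely $\pi \circ j \circ (\Psi(S_t)^{-1} \times \mathrm{Id}) \circ j^{-1}(x)$, is precisely the base point $q$ attached to $x$; I would make this identification explicit at the start, since the whole computation reduces to tracking how $q$ and the Hamiltonian are related.

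Next I would recall the standard fact from the smooth generating-function calculus: if a time-dependent lagrangian isotopy $L_t = \mathrm{graph}(dS_t)$ is generated (via the Weinstein chart) by a Hamiltonian isotopy with Hamiltonian $H$, then the generating functions satisfy a Hamilton--Jacobi relation
\begin{equation*}
\derive{S_t}{t}(q) = -H(t, \text{(point of } L_t \text{ over } q)),
\end{equation*}
where the right-hand side is evaluated at the image in $M$ under the isotopy. Concretely I would differentiate the identity $a_t \circ (\pi \circ a_t)^{-1} = dS_t$ in $t$, use that $\Psi(S_t)$ is the time-dependent flow generated by $H$ so that $\derive{}{t}\Psi(S_t) = X_H(t, \cdot) \circ \Psi(S_t)$, and push this infinitesimal deformation through the Weinstein chart $j$. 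The defining relation $\iota_{X_H}\omega = dH$ converts the vertical (fibre) component of the velocity of $L_t$ into $dH$, and the tautological one-form on $T^*M$ identifies the $t$-derivative of the generating function with $-H$ evaluated at the correct point.

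The main obstacle, and the step demanding care, is the bookkeeping that converts $-H$ into the precise argument $\pi \circ j \circ (\Psi(S_t)^{-1} \times \mathrm{Id}) \circ j^{-1}(x)$ rather than some other natural-looking base point. There are two competing parametrisations of the lagrangian: by the source of $\Psi(S_t)$ and by its target, related through the two projections $p_1$ and $p_2$ of $M \times M$, and one must pin down which one the flow $\Psi(S_t)$ acts on. The appearance of $\Psi(S_t)^{-1} \times \mathrm{Id}$ (rather than $\mathrm{Id} \times \Psi(S_t)^{-1}$) reflects exactly this choice: it selects the graph point $(\Psi(S_t)^{-1}(y), y)$ whose base image under $j$ and $\pi$ gives the correct $q$. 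I would resolve this by carefully writing, for a fixed $x \in M$, which point of $\mathrm{graph}(\Psi(S_t)) \subset M \times M$ maps to the relevant point of the lagrangian, and then checking that the chain rule applied to $\derive{}{t}S_t \circ q_t$ produces no extra terms because $\derive{S_t}{t}$ is evaluated at the frozen base point $q$ while the velocity of $q_t$ contributes a term that is tangent to $L_t$ and hence annihilated by the tautological form. Everything else is the routine smooth-category computation that underlies Lemma \ref{lemme homeo-fnct}.
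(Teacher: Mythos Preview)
Your approach is the same as the paper's: set $f_t=\mathrm{Id}\times\Psi(S_t)$ and $q_t=\pi\circ j\circ f_t\circ j^{-1}$, use the relation $dS_t\circ q_t\circ j=j\circ f_t$, differentiate in $t$, and identify terms. Your identification of the argument $\pi\circ j\circ(\Psi(S_t)^{-1}\times\mathrm{Id})\circ j^{-1}(x)$ with the relevant base point $q$ is correct.

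There is, however, a mistake in your last paragraph. You claim the chain-rule cross term $dS_t(q_t)\cdot\dot q_t$ disappears because the velocity of $q_t$ ``is tangent to $L_t$ and hence annihilated by the tautological form.'' The tautological form $\lambda_0$ does \emph{not} vanish on tangent vectors to $L_t$; on the contrary, $\lambda_0|_{L_t}$ pulls back to $dS_t$, so $\lambda_0$ applied to the lift of $\dot q_t$ to $L_t$ is exactly $dS_t(q_t)\cdot\dot q_t$, which is generically nonzero. The cancellation works differently: differentiating $f_t^*(j^*\lambda_0)$ in $t$ via Cartan's formula produces two pieces, one involving $\iota_{\dot f_t\circ f_t^{-1}}\, d(j^*\lambda_0)$, which yields $-d(H_t\circ\Psi(S_t))$, and one involving $d\bigl(f_t^*\iota_{\dot f_t\circ f_t^{-1}}(j^*\lambda_0)\bigr)$. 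It is this second piece that equals $d\bigl(dS_t(q_t)\cdot\dot q_t\bigr)$, precisely because $\lambda_0|_{L_t}=dS_t$. So the cross terms match across the equation rather than vanishing individually. Once you correct this mechanism, your computation and the paper's coincide.
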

In $\RN$, this formula is just the classical Hamilton-Jacobi Equation.

\begin{demo} We set $f_t=\Id\times\Psi(S_t)$, $q_t=\pi\circ j\circ f_t\circ j^{-1}$, and denote by $\lambda_0$ the canonical
Liouville form on $T^*M$. We have seen in the proof of Lemma \ref{lemme homeo-fnct} that $dS_t\circ q_t\circ j=j\circ
f_t$.

We first pull back the Liouville form.  Since $\sigma^*\lambda_0=\sigma$ for any 1-form $\sigma$ on $M$,
$j^*q_t^*dS_t^*\lambda_0=j^*q_t^*dS_t=d(S_t\circ q_t\circ j)$. We thus have:
$$d(S_t\circ q_t\circ j)=f_t^*(j^*\lambda_0).$$
We then take derivative with respect to $t$: \begin{eqnarray*}\lefteqn{d\left(\derive{S_t}{t}(q_t\circ j)+dS_t(q_t\circ
j)\cdot\frac{dq_t}{dt}\circ j \right)} \\& = &
f_t^*(\iota_{\frac{df_t}{dt}\circ{f_t^{-1}}}d(j^*\lambda_0))+d(f_t^*(\iota_{\frac{df_t}{dt}\circ{f_t^{-1}}}(j^*\lambda_0))).\end{eqnarray*}
But since $j$ is symplectic, $d(j^*\lambda_0)=\omega\oplus(-\omega)$ hence
$$f_t^*(\iota_{\frac{df_t}{dt}\circ{f_t^{-1}}}d(j^*\lambda_0))=0-\Psi(S_t)^*(\iota_{\frac{\Psi(S_t)}{dt}\circ\Psi(S_t)^{-1}}\omega)=-d(H_t\circ\Psi(S_t)).$$
Therefore, after taking (compactly supported) primitive, we get:
$$\derive{S_t}{t}(q_t\circ j)+dS_t(q_t\circ j)\cdot\frac{dq_t}{dt}\circ j = -H_t\circ\Psi(S_t)+f_t^*(\iota_{\frac{df_t}{dt}\circ{f_t^{-1}}}(j^*\lambda_0)).$$
It remains to show that $dS_t(q_t\circ j)\cdot\frac{dq_t}{dt}\circ j=f_t^*(\iota_{\frac{df_t}{dt}\circ
f_t^{-1}}(j^*\lambda_0))$. To see this, recall that for any one form $\sigma$ on $M$, the pullback $\pi^*\sigma$ by the
canonical projection coincides with $\lambda_0$ on the image of $\sigma$ (which is a smooth submanifold of $T^*M$).
Then,
\begin{align*}dS_t(q_t\circ
j)\cdot\frac{dq_t}{dt}\circ j &= (\pi^*dS_t)(j\circ f_t)\cdot dj\frac{df_t}{dt}\\
&=(j^*\lambda_0)(f_t)\cdot\frac{df_t}{dt}.\end{align*} This concludes our proof.
\end{demo}

\begin{demo}[of Proposition \ref{proposition FG->hameo}]
Let $(S_t)$ be our path of generating functions. By assumption, there is a sequence of smooth paths of smooth
admissible generating functions $(S_t^k)$ that converges in the $C^1$-sense to $(S_t)$. Let $H_k$ be the generating
Hamiltonian function of the Hamiltonian isotopy $\Psi(S_t^k)$.

By Lemma \ref{lemme homeo-fnct}, the isotopies $(\phi_{H_k}^t)=(\Psi(S_t^k))$ $C^0$-converge to $\Psi(S_t)$. Moreover,
by Lemma \ref{lemme hamilton-jacobi}, the Hamiltonian functions $$H_k=\derive{S_t^k}{t}(\pi\circ
j\circ(\Psi(S_t^k)^{-1}\times\Id)\circ j^{-1}(x))$$ also $C^0$-converge. This shows that $(\Psi(S_t))$ is a
$C^0$-Hamiltonian isotopy.
\end{demo}

\section{Examples in $\RN$}\label{section examples}

In this section, we give some examples of elements in $G_2$ and $G_3$ in $\RN$. Using local Darboux coordinates, they
can of course be implanted in other symplectic manifolds.

\subsection{Examples of elements in $G_2$}

In $\RN\times\RN$, there exists globally defined Weinstein charts sending the diagonal to the zero section in $T^*\RN$.
We will use the following one:
$$j:\RN\times\RN\to T^*\RN=\RN\times\RN, (x,y;\xi,\eta)\mapsto(x,\eta;y-\eta,\xi-x).$$

In this Weinstein chart, admissible homeomorphisms and admissible generating functions are associated by the following
relation:
$$f(x,y)=(\xi,\eta)\quad\Longleftrightarrow\quad\begin{cases}\xi=x+\derive{S}{\eta}(x,\eta)\\
y=\eta+\derive{S}{x}(x,\eta)
\end{cases}.$$

Therefore, admissible generating function are the compactly supported $C^1$ functions $S:\RN\to\R$ such that
\begin{itemize}
  \item for all $\eta\in\R^n$, the map $x\mapsto
  x+\derive{S}{\eta}(x,\eta)$ is a homeomorphism of $\R^n$,
  \item for all $x\in\R^n$, the map
  $\eta\mapsto\eta+\derive{S}{x}(x,\eta)$ is a homeomorphism of $\R^n$.
\end{itemize}

\begin{prop}\label{proposition example R2n}
Any compactly supported $C^1$ function $S:\RN\to\R$ such that, in any point $(x,\eta)\in\RN$ the maps
\begin{equation*}x_i\mapsto  x_i+\derive{S}{\eta_i}(x,\eta)\text{ and
}\eta_i\mapsto\eta_i+\derive{S}{x_i}(x,\eta),\text{ for } i\in\{1,\ldots,n\},\end{equation*} are increasing
homeomorphisms of $\R$, is a super-admissible generating function.
\end{prop}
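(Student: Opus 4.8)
The plan is to verify directly the two properties defining super-admissibility (Definition \ref{defintion G2}): that $S$ is admissible, and that it is a $C^1$-limit of smooth admissible generating functions. In the fixed global Weinstein chart used here, admissibility of a compactly supported $C^1$ function $S$ means exactly that for each fixed $\eta$ the map $x\mapsto x+\derive{S}{\eta}(x,\eta)$ is a homeomorphism of $\R^n$, and that for each fixed $x$ the map $\eta\mapsto\eta+\derive{S}{x}(x,\eta)$ is a homeomorphism of $\R^n$. Thus the whole problem is to upgrade the one-dimensional, coordinatewise hypotheses to these two $n$-dimensional homeomorphism statements, and then to approximate.

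I would treat admissibility first, since it is the core. Fix $\eta$ and set $G(x)=x+\derive{S}{\eta}(x,\eta)$; the argument for the $\eta$-maps is identical. Surjectivity is cheap: because $S$ is compactly supported, $G$ coincides with the identity outside a compact set, so it is proper and homotopic to the identity through proper maps, hence of degree one and in particular onto; the same remark confines every solution of $G(x)=\xi$ to a fixed ball. The real content is \textbf{injectivity}, namely passing from the hypothesis that each slice $x_i\mapsto G_i(x)$ (all other coordinates frozen) is a strictly increasing homeomorphism of $\R$, to global injectivity of $G$ on $\R^n$. My plan is to invert each slice map---which is legitimate precisely because each slice is a genuine increasing homeomorphism of the whole line---to produce a solution operator for $G(x)=\xi$, and then to prove that the associated fixed-point problem has a unique solution, using both the global (not merely local) monotonicity of the slices and the fact that $G$ is the identity off a compact set.

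I expect injectivity to be the main obstacle. Coordinatewise strict monotonicity by itself does not imply injectivity of a map $\R^n\to\R^n$, since the off-diagonal entries of the Jacobian are left uncontrolled; any successful argument must therefore genuinely use the compact support together with the fact that the slices are homeomorphisms of all of $\R$. The delicate case is that of two distinct preimages whose coordinates move in opposite directions, where comparing values along a single coordinate yields no contradiction. Once injectivity is established, invariance of domain promotes the continuous bijection $G$ to a homeomorphism, so both conditions characterising admissibility hold.

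Finally, super-admissibility follows by mollification, which is the robust part. Writing $S^k=S*\rho_k$ for a nonnegative approximate identity $\rho_k$ on $\RN$ gives smooth, compactly supported functions with $S^k\to S$ in the $C^1$-sense. Crucially, convolution against a nonnegative kernel preserves the coordinatewise hypotheses: for each fixed shift the map $x_i\mapsto x_i+\derive{S}{\eta_i}$ evaluated at the shifted point is again increasing, so $x_i\mapsto x_i+\derive{S^k}{\eta_i}(x,\eta)$ is a nonnegative average of strictly increasing homeomorphisms and hence itself a strictly increasing homeomorphism of $\R$ (continuous, strictly increasing, and proper because it differs from $x_i$ by a bounded compactly supported term); likewise for the $\eta$-slices. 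Thus each $S^k$ satisfies the hypotheses of the proposition, hence is admissible by the previous step, and $S$ is exhibited as a $C^1$-limit of smooth admissible generating functions. This completes the proposal.
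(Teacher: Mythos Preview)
Your overall architecture coincides with the paper's: first argue admissibility of $S$, then obtain super-admissibility by mollifying with a nonnegative kernel and observing that the coordinatewise monotonicity hypotheses survive convolution, so each $S_k=\chi_k*S$ again satisfies the hypotheses and is therefore admissible. That second half is exactly what the paper does, with the same strict-increase computation for the averaged maps.

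The one substantive divergence is the injectivity of the full maps $x\mapsto x+\partial S/\partial\eta(x,\eta)$ and $\eta\mapsto\eta+\partial S/\partial x(x,\eta)$. The paper disposes of it in a single line (``one sees easily that [the map] is continuous and injective'') and then deduces surjectivity via an embedding argument. You, by contrast, get surjectivity independently by a degree/properness argument and isolate injectivity as the genuine crux. Your caution is well placed: coordinatewise strict monotonicity alone does \emph{not} force injectivity of a self-map of $\R^n$. For instance, with $n=2$, take $F(x,y)=(x+g(y),\,y+g(x))$ for a compactly supported $g$ with $g'>1$ somewhere; then each component is strictly increasing in its own variable and $F=\Id$ off a compact set, yet choosing $a\neq b$ with $a-g(a)=b-g(b)$ gives $F(a,b)=F(b,a)$. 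So the paper's ``easily'' is, at best, an unproved assertion, and your proposed fixed-point scheme (or some other argument genuinely using the compact support together with the global slice homeomorphisms) is precisely what is missing from both write-ups. If you can carry that step through, the rest of your plan is complete and matches the paper; if not, you are in the same position as the paper, only more honestly so.
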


\begin{demo}First, such a function is admissible: for any $x,\eta\in\R^n$ the maps $\eta\mapsto \eta+\derive{S}{x}(x,\eta)$
and $x\mapsto x+\derive{S}{\eta}(x,\eta)$ are homeomorphisms of $\R^n$.

Indeed, one see easily that $\eta\mapsto \eta+\derive{S}{x}(x,\eta)$ is continuous and injective. Since it is compactly
supported, it is also proper and hence is an embedding. Finally, this implies that it is onto, because otherwise its
image would contain non-contractible spheres $\S_{n-1}$. The same argument holds for $x\mapsto
x+\derive{S}{\eta}(x,\eta)$.

Let us now show that $S$ can be approximated in the $C^1$-sense by smooth generating functions.

Let $\chi$ be a smooth non-negative function, defined on $\R^{2n}$, whose support is contained in a disk centered in 0
and with integral equal to 1. For any positive integer $k$, we set $\chi_k=k^{2n}\chi(\frac{\cdot}k)$. Then, it is well
known that the sequence of smooth functions $(S_k)$ defined by
$$S_k(x,\eta)=\chi_k*S(x,\eta)=\int_{\R^{2n}}S(x-u,\eta-v)\chi_k(u,v)\,du\,dv,$$
$C^1$-converges to $S$ as $k$ goes to infinity. Moreover, there exists a compact set that contains the supports of
every $S_k$.

\medskip
let us now prove that the $S_k$ are admissible generating functions. Set
$$\alpha(x,\eta)=x+\derive{S}{\eta}(x,\eta)\text{ and
}\beta(x,\eta)=\eta+\derive{S}{x}(x,\eta).$$ According to the first part of the proof, it is enough to prove that for
any indices $i$, the maps $x_i\mapsto q_i\circ(\chi_k\ast\alpha(x,\eta))$ and $\eta_i\mapsto
p_i\circ(\chi_k\ast\beta(x,\eta))$ are increasing homeomorphisms of $\R$. They are clearly continuous. Since they are
compactly supported, we only need to show that they are increasing. Let us prove it for $x_1\mapsto
q_1\circ(\chi_k\ast\alpha(x,\eta))$. The proof is similar for the others.

Fix $\eta,x_2,\ldots,x_n$ and $x_1<x_1'$ and denote $x=(x_1,x_2,\ldots,x_n)$ and $x'=(x_1',x_2,\ldots,x_n)$. We want to
compare $q_1\circ(\chi_k\ast\alpha(x,\eta))$ with $q_1\circ(\chi_k\ast\alpha(x',\eta))$. By assumption, for all
$(u,v)\in\R^{2n}$,
$$q_1\!\circ\alpha(x-u,\eta-v)\,<\,q_1\!\circ\alpha(x'-u,\eta-v),$$
thus the following integral is non-negative:
$$\int_{R^{2n}}\chi_k(u,v)\left[\,q_1\!\!\circ\!\alpha(x'\!\!-\!u,\eta\!-\!v)-q_1\!\!\circ\!\alpha(x\!-\!u,\eta\!-\!v)\,\right]\,du\,dv.$$
It is moreover positive because it is the integral of a non-negative continuous function which is non-identically zero.
This integral is nothing but $q_1\circ(\chi_k\ast\alpha(x,\eta))-q_1\circ(\chi_k\ast\alpha(x',\eta))$. Therefore the
map $x_1\mapsto q_1\circ(\chi_k\ast\alpha(x,\eta))$ is an increasing homeomorphism of $\R$.
\end{demo}

\begin{remark}The conjugation $\mu_t\circ\phi\circ\mu_t^{-1}$ by the Liouville flow $\mu_t:x\mapsto e^{t/2}x$ of an homeomorphism $\phi$ associated to a
generating function $S$ like in Proposition \ref{proposition example R2n}, is also admissible and is associated to the
generating function $e^tS(e^{-t/2}x,e^{-t/2}\eta)$. This function satisfies the hypothesis of Proposition
\ref{proposition example R2n} and hence is also a super-admissible generating function. It follows that such a $\phi$
is in $G_2$.
\end{remark}

\begin{remark}Any generating function like in Proposition \ref{proposition example R2n}, which is \textbf{not}
$C^{1,1}$, gives rise to an example of element which is in $G_2$ but not in $G_1$.
\end{remark}

\subsection{Fibered rotations in $\R^2$}

By definition a \textit{fibered rotation} is an homeomorphism $\phi$ of $\R^2$ described in polar coordinates
$(r,\theta)$ by the formula
$$\phi(r,\theta)=(r,\theta+\rho(r)),$$
for some continuous \textit{angular} function $\rho:(0,+\infty)\to\R$ with bounded support. It is easily checked that
any fibered rotation lies in the identity component of the group of compactly supported area preserving homeomorphism
of $\R^2$.

We consider $\mu_t$ the Liouville flow given by $\mu_t(r,\theta)=(e^{t/2}r,\theta)$. Its commutator with a fibered
rotation is given by
$$[\mu_t,\phi](r,\theta)=(r,\theta-\rho(r)+\rho(e^{-t/2}r)).$$
If $\phi$ is moreover a diffeomorphism, the generating Hamiltonian of the isotopy $t\mapsto[\mu_t,\phi]$ is
$$H(t,r,\theta)=r\rho(e^{-t/2}r)-\frac12\int_0^r\rho(e^{-t/2}s)\,ds.$$

Now suppose that $\rho$ is a continuous and integrable angular function, such that $r\rho(r)$ converges to 0 when $r$
tends to 0. Suppose also that $\rho_k$ is a sequence of smooth compactly supported angular functions (in particular
they vanish nearby 0) that converges uniformly to $\rho$ on any compact subset of $(0,+\infty)$. Then, clearly, the
associated sequence of fibered rotations $(\phi_k)$ converges in the $C^0$-sense to $\phi$, and the sequence of
Hamiltonians $(H_k)$ generating the isotopies $t\mapsto[\mu_t,\phi_k]$ also $C^0$-converges.

As a consequence, \emph{any fibered rotation associated to an {integrable} angular function $\rho$ such that
$r\rho(r)\stackrel{r\to 0}{\longrightarrow}0$, belongs to $G_3$.}

\begin{remark}This gives examples of elements that are in $G_3$ but not in $G_2$: if $\rho$ is not finite (nearby 0),
the fibered rotation $\phi$ cannot be in $G_2$. Indeed, the angle between a vector and its image by an admissible
homeomorphism is bounded by $\pi$. Therefore, this angle has to be finite for elements of $G_2$.
\end{remark}

\subsection*{Aknowledgments}
I wish to thank the members of the ANR project "Symplexe" for all they taught me and for the very motivating working
atmosphere during our meetings. In particular, I thank Frédéric Le Roux and Pierre Py for many interesting discussions
on the subject of the present article, and for their comments on its first version. I am also grateful to Claude
Viterbo for his constant support.

\end{document}